\DeclareMathAlphabet\mathbfcal{OMS}{cmsy}{b}{n}
\pgfplotsset{compat=newest}
\pgfplotsset{
	shortlegend/.style={%
		legend image code/.code={
			\draw[##1,line width=0.8pt]
				plot coordinates {
					(0cm,0cm)
					(0.3cm,0cm)
					};%
				}
		}
}
\newcolumntype{j}{>{\centering\arraybackslash}p}
\newcolumntype{z}{>{\raggedleft\arraybackslash}p}
\newcolumntype{J}{>{\centering\arraybackslash}X}
\definecolor{Color0}{HTML}{E41A1C}
\definecolor{Color1}{HTML}{377EB8}
\definecolor{Color2}{HTML}{4DAF4A}
\definecolor{Color3}{HTML}{984EA3}
\definecolor{Color4}{HTML}{FF7F00}
\newcommand{\vek}[1]{{\ensuremath{\bm{#1}}}}
\newcommand{\iv}[1]{\ensuremath{\left[#1\right]}}
\newcommand{\lb}[1]{\ensuremath{\underline{#1}}}
\newcommand{\ub}[1]{\ensuremath{\overline{#1}}}
\newcommand*{\QED}{\null\nobreak\hfill\ensuremath{\square}}
\algrenewcommand\algorithmicindent{1em}
\newcommand{\dcocpp}{{\ttfamily
dco\kern-.08em{\raisebox{-.1ex}{/}\kern-.15em
{c\kern-.03em{\raisebox{-.18ex}{+\kern-.028em{+}}}}}}}
\begin{document}
\title{Subdomain Separability in Global Optimization}
%
\author{Jens Deussen \and Uwe Naumann}
\authorrunning{J. Deussen et al.}
%
\institute{Informatik 12: Software and Tools for Computational Engineering,\\
RWTH Aachen University, Aachen, Germany\\
\email{\{deussen,naumann\}@stce.rwth-aachen.de}}
\maketitle              
\begin{abstract}
We propose a generalization of separability in the context of global
optimization.
Our results apply to objective functions implemented as differentiable computer
programs.
They are presented in the context of a simple branch and bound method.
The often significant search space reduction can be expected to yield an
acceleration of any global optimization method.
We show how to utilize interval derivatives calculated by adjoint algorithmic
differentiation to examine the monotonicity of the objective with respect to
so called structural separators and how to verify the latter automatically.

\keywords{
\and global optimization
\and algorithmic differentiation
\and branch and bound
\and interval adjoints
\and search space reduction
\and separable functions.}
\end{abstract}

\section{Introduction}
\label{sec:intro}
In contrast to local optimization methods, deterministic global optimization
methods, e.g.\ interval-based branch and bound (b\&b) algorithms
\cite{Falk1969}, guarantee to find the global solution for a predefined
tolerance for optimality in finite time \cite{Floudas2000}.
These methods are more expensive in terms of computational effort than their
local counterparts.

An important property that should be exploited during optimization is
separability of the objective function.
A function $f: \mathbb{R}^n \to \mathbb{R}$ is called partially separable
(also: decomposable) if it is of the form
\begin{align}
f(\vek{x}) = f_1\big((x_i)_{i\in X_1}\big) + f_2\big((x_i)_{i\in X_2}\big)\ ,
\label{eq:sep}
\end{align}
with a given partitioning of the set of indexes of independents into two
disjoint subsets $X_1$, $X_2$ and functions
$f_1: \mathbb{R}^{|X_1|} \to \mathbb{R}$ and
$f_2: \mathbb{R}^{|X_2|} \to \mathbb{R}$.
The function is called (fully) separable if the separation can be applied
recursively until all disjoint subsets only contain a single element
\cite{Jamil2013,Li2013}.
For a global optimization problem
\begin{align*}
y^\ast = \min_{\vek{x} \in D\subseteq\mathbb{R}^n} f(\vek{x})\ ,
\end{align*}
with partially separable objective function $f$ as in \eqref{eq:sep} it is
well known \cite{Hadley1964} that the global minimum can be obtained
by decomposing the problem into smaller subproblems
\begin{align*}
y^\ast
=\min_{(x_i \in D_i)_{i\in X_1}\subseteq\mathbb{R}^{|X_1|}}
f_1\big((x_i)_{i\in X_1}\big)
+\min_{(x_i \in D_i)_{i\in X_2}\subseteq\mathbb{R}^{|X_2|}}
f_2\big((x_i)_{i\in X_2}\big)\ ,
\end{align*}
that can be solved in parallel.
In the context of b\&b algorithms with a division into $k$ parts
for all dimensions every non-leaf node generates $k^n$ children.
The decomposition reduces the number of generated nodes to
$\mathcal{O}(k^{\max(|X_1|,|X_2|)})$ for the particular
problem and thus results in a potentially significant reduction of the
corresponding search space.

Separable functions have been extensively researched in the context of
optimization.
In \cite{Griewank1982} a quasi-Newton method is introduced that exploits the
structure of partially separable functions when computing secant updates for
the Hessian matrix.
A parallel b\&b approach was used in \cite{Phillips1990} to find
optima of non-convex problems with partially separable functions over a bounded
polyhedral set.
In \cite{Colson2006} a derivative-free method for exploiting partial
separability in unconstrained optimization was proposed.
The automatic detection of partial separability as in \eqref{eq:sep} by
algorithmic differentiation was proposed in \cite{Gay1996}.

In \cite{Salomon1996} a class of problems was introduced, which is called
\emph{as easy to optimize as decomposable functions}
and that is related to the present work.
Such functions satisfy
\begin{align}
\frac{df}{dx_i}(\vek{x}) = g(x_i)\cdot h(\vek{x})\ ,
\label{eq:salomon_sep}
\end{align}
such that the first-order optimality condition
\begin{align*}
\frac{df}{dx_i}(\vek{x}) = 0\ ,
\end{align*}
can be transformed to $g(x_i)=0$.
The equation is only dependent on a single variable.
Optima for which $h(\vek{x})=0$ and optima at the boundary are not taken into
consideration by this approach.

In this paper we aim to generalize the concept of separability in order to make
previously non-separable functions also benefit from decomposition of the
optimization problem on subdomains.
Therefore, the function must be of a special structure which is less
restrictive than \eqref{eq:sep}, but is a variation of
\eqref{eq:salomon_sep} and additionally needs to fulfill a monotonicity
condition on the separator.
The monotonicity condition guarantees that the decomposition still takes all
possible optima into consideration which is crucial for the integration into
deterministic global optimization algorithms.

We use interval adjoints as a combination of reliable
interval computations \cite{Moore1979,Moore2009} and adjoint algorithmic
differentiation \cite{Griewank2008,Naumann2012} to obtain an enclosure of
all adjoints over a given subdomain.
In \cite{Vassiliadis2016} we used this information for significance based
approximate computing.
In \cite{Afghan2020} we discussed significance analysis in the context of
neural networks.
Deterministic global optimization through a check for first-order
optimality is described in \cite{Deussen2019}.
In the following we show how to use interval adjoints for a monotonicity
check of structural separators and for the verification of these separators.

The paper is organized as follows:
In Section~\ref{sec:contribution} we define structural separability and we
formulate the necessary monotonicity condition for the decomposition of the
optimization problem.
Examples for functions that are non-separable by \eqref{eq:sep} but fulfill
the new definition such that their corresponding optimization problem can still
be decomposed are given.
Section~\ref{sec:implementation} explains how to implement the presented work
and how to integrate it into a b\&b algorithm for deterministic global
optimization.
Therefore, interval adjoints are utilized for the examination of the
monotonicity condition and for automatic detection of separators.
In Section~\ref{sec:casestudy} we show results from a proof of concept
implementation for the examples from Section~\ref{sec:contribution}
followed by conclusion and outlook in Section~\ref{sec:conclusion}.

\section{Subdomain Separability}
\label{sec:contribution}
We introduce subdomain separability and we show how to exploit this property in
global optimization.
\begin{definition} \label{def:gs}
A function $f: \mathbb{R}^n \to \mathbb{R}$ is called structurally separable
if it is of the form
\begin{align*}
f(\vek{x}) = f_s\left(s\big((x_i)_{i\in X_1}\big),(x_i)_{i\in X_2}\right)
\end{align*}
with disjoint and non-empty index sets $X_1$ and $X_2$.
The scalar function $s\big((x_i)_{i\in X_1}\big)$ is called structural
separator.
\end{definition}

Conventionally separable functions as in \eqref{eq:sep} are covered by
Definition~\ref{def:gs} with structural separators
$s_1=f_1\big((x_i)_{i\in X_1}\big)$, $s_2=f_2\big((x_i)_{i\in X_2}\big)$ and
\begin{align*}
f_s\big(s_1,(x_i)_{i\in X_2}\big) = s_1 + f_2\big((x_i)_{i\in X_2}\big)\ ,\\
f_s\big(s_2,(x_i)_{i\in X_1}\big) = s_2 + f_1\big((x_i)_{i\in X_1}\big)\ .
\end{align*}

Application of the chain rule of differentiation to differentiable structurally
separable functions yields the gradient
\begin{align*}
\frac{df}{d\vek{x}}(\vek{x}) = \begin{pmatrix}
\frac{df}{ds}(\vek{x})\cdot
\frac{ds}{dx_i}\big((x_j)_{j\in X_1}\big) &\forall i \in X_1\\
\frac{df}{dx_i}(\vek{x}) &\forall i \in X_2
\end{pmatrix}\ .
\end{align*}
If $X_1$ only contains a single element, then the structurally separable
function $f$ also satisfies \eqref{eq:salomon_sep} with
$g(x_i)=\frac{ds}{dx_i}(x_i)$ and
$h(\vek{x})=\frac{df}{ds}(\vek{x})$.

\begin{theorem}
\label{th:sep}
Consider the global optimization problem
\begin{align}
\min_{\vek{x} \in D \subseteq \mathbb{R}^n} f(\vek{x})
 = f_s\left(s\big((x_i)_{i\in X_1}\big),(x_i)_{i\in X_2}\right)\ ,
\label{eq:opt}
\end{align}
with structurally separable, non-convex and differentiable objective function
$f$ and separator $s\big((x_i)_{i\in X_1}\big)$.
If the objective function is monotonic w.r.t.\ the separator on the domain,
that is,
\begin{align}
\frac{df}{ds}(\vek{x})\geq0\quad\forall\vek{x}\in D
\ \vee\ \frac{df}{ds}(\vek{x})\leq0\quad\forall\vek{x}\in D\ ,\label{eq:cond}
\end{align}
and
\begin{align*}
\frac{df}{ds}(\vek{x})=\frac{\partial f_s}{\partial s}(s,(x_i)_{i\in X_2})\ ,
\end{align*}
then the optimization problem in \eqref{eq:opt} can be decomposed into
\begin{align}
\min_{(x_i \in D_i)_{i \in X_2}} &
f_s\left(s^\ast,(x_i)_{i\in X_2}\right)\ ,
\label{eq:optgps1}\\
\mathrm{s.t.}\ & s^\ast =
\begin{cases}
\displaystyle\min_{(x_i \in D_i)_{i \in X_1}}
s\big((x_i)_{i\in X_1}\big)\ 
\mathrm{if}\ \frac{df}{ds}(\vek{x})\geq0\ \forall \vek{x} \in D\ ,\\
\displaystyle\max_{(x_i \in D_i)_{i \in X_1}}
s\big((x_i)_{i\in X_1}\big)\ 
\mathrm{if}\ \frac{df}{ds}(\vek{x})\leq0\ \forall \vek{x} \in D\ .
\end{cases}\label{eq:optgps2}
\end{align}
\end{theorem}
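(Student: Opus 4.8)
\smallskip
\noindent\textbf{Proof plan.}\quad
The plan is to collapse the $n$-dimensional minimization onto the one-dimensional axis of the separator and then exploit monotonicity there. Write $\vek{x}=(\vek{x}_1,\vek{x}_2)$ with $\vek{x}_1=(x_i)_{i\in X_1}$, $\vek{x}_2=(x_i)_{i\in X_2}$, and take $D=D_1\times D_2$ with $D_1=\prod_{i\in X_1}D_i$ and $D_2=\prod_{i\in X_2}D_i$, as is implicit in \eqref{eq:optgps1}--\eqref{eq:optgps2} and as holds in the branch-and-bound setting where $D$ is a box. First I would record that, since $s$ is continuous and $D_1$ is compact and connected, the image $S=\{s(\vek{x}_1):\vek{x}_1\in D_1\}$ is a compact interval $[\lb{s},\ub{s}]$ whose endpoints are attained and are exactly the inner optimization values in \eqref{eq:optgps2}. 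By \eqref{eq:cond} there are two cases, and it suffices to carry out the case $\frac{df}{ds}(\vek{x})\ge 0$ on $D$; the other follows verbatim with each $\min$ over $\vek{x}_1$ replaced by $\max$ (equivalently, by applying the same argument to the separator $-s$), which produces $s^\ast=\ub{s}$.

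Next I would lift the pointwise sign condition from $D$ onto $S$. Fixing $\vek{x}_2\in D_2$, set $\varphi_{\vek{x}_2}(\sigma)=f_s(\sigma,\vek{x}_2)$ for $\sigma\in S$. For any $\sigma\in S$ choose $\vek{x}_1\in D_1$ with $s(\vek{x}_1)=\sigma$; then $(\vek{x}_1,\vek{x}_2)\in D$, and combining the hypothesis $\frac{df}{ds}(\vek{x})=\frac{\partial f_s}{\partial s}(s,(x_i)_{i\in X_2})$ with \eqref{eq:cond},
\begin{align*}
\varphi_{\vek{x}_2}'(\sigma)=\frac{\partial f_s}{\partial s}(\sigma,\vek{x}_2)=\frac{df}{ds}(\vek{x}_1,\vek{x}_2)\ge 0\ .
\end{align*}
Hence $\varphi_{\vek{x}_2}$ is non-decreasing on the interval $S$, so $\min_{\sigma\in S}\varphi_{\vek{x}_2}(\sigma)=\varphi_{\vek{x}_2}(\lb{s})$.

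Then I would split the minimization and substitute. Since $s$ maps $D_1$ onto $S$, for each fixed $\vek{x}_2\in D_2$ the sets $\{f_s(s(\vek{x}_1),\vek{x}_2):\vek{x}_1\in D_1\}$ and $\{\varphi_{\vek{x}_2}(\sigma):\sigma\in S\}$ coincide, so
\begin{align*}
\min_{\vek{x}_1\in D_1} f_s\big(s(\vek{x}_1),\vek{x}_2\big)=\min_{\sigma\in S}\varphi_{\vek{x}_2}(\sigma)=\varphi_{\vek{x}_2}(\lb{s})=f_s(s^\ast,\vek{x}_2)
\end{align*}
with $s^\ast=\lb{s}=\min_{\vek{x}_1\in D_1}s(\vek{x}_1)$, which is the first branch of \eqref{eq:optgps2}. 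Minimizing over $\vek{x}_2$ and using that a minimum over the product $D_1\times D_2$ may be evaluated iteratively,
\begin{align*}
\min_{\vek{x}\in D} f(\vek{x})=\min_{\vek{x}_2\in D_2}\;\min_{\vek{x}_1\in D_1} f_s\big(s(\vek{x}_1),\vek{x}_2\big)=\min_{\vek{x}_2\in D_2} f_s(s^\ast,\vek{x}_2)\ ,
\end{align*}
which is the decomposed problem \eqref{eq:optgps1}--\eqref{eq:optgps2}. I would finish with the converse bookkeeping: if $\vek{x}_1^\ast$ attains $\lb{s}$ over $D_1$ and $\vek{x}_2^\ast$ solves the reduced problem, then $(\vek{x}_1^\ast,\vek{x}_2^\ast)$ is a global minimizer of $f$, so the decomposition discards no optima --- the property that makes it safe to embed in a deterministic b\&b method.

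The step I expect to be the crux is the lifting in the second paragraph: converting the \emph{global} hypothesis \eqref{eq:cond} on all of $D$ into monotonicity of $\sigma\mapsto f_s(\sigma,\vek{x}_2)$ over the \emph{entire} range $S$ and for \emph{every} $\vek{x}_2\in D_2$. This leans both on $s$ being onto $S$ and on $S$ being an interval (connectedness of the box $D_1$); if \eqref{eq:cond} held only on a proper subset of $D$, or if $S$ were disconnected, then the reduction of the inner minimization to a single evaluation of $f_s$ at an extreme separator value could fail. Everything else --- compactness giving attained extrema, continuity of $\varphi_{\vek{x}_2}$, iterated minimization over a product set --- is routine.
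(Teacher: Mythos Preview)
Your proposal is correct and follows essentially the same route as the paper: both arguments fix the $X_2$-variables, use the sign condition \eqref{eq:cond} to conclude that $f_s$ is monotone in the separator, and deduce that the inner minimum over the $X_1$-variables is attained at an extremal value of $s$. Your write-up is in fact more careful than the paper's, which asserts the key inequality $f_s(s^-,\cdot)\le f_s(s^+,\cdot)$ without spelling out the lifting step or the box/connectedness hypotheses you rightly flag as the crux.
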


\begin{proof}
From \eqref{eq:cond} we know that the objective function is either
monotonically increasing or decreasing w.r.t.\ the separator.
In case it is monotonically increasing, that is $\frac{df}{ds}(\vek{x})\geq0$,
over the subdomain $D$, we have
\begin{align*}
f_s\left(s^-,(x_i)_{i\in X_2}\right)\leq f_s\left(s^+,(x_i)_{i\in X_2}\right)
\ \forall (x_i \in D_i)_{i\in X_2} \ ,
\end{align*}
for $s^- \leq s^+$.
As to $\frac{\partial f_s}{\partial x_i}(s,(x_j)_{j\in X_2})=0$ for
$i\in X_1$, and due to monotonicity
$\frac{\partial f_s}{\partial s}(s,(x_j)_{j\in X_2})>0$ the global minimum of
$f$ requires the separator $s$ to be minimal on the domain.
The monotonic decrease scenario is handled analogously.
\QED
\end{proof}
\begin{remark}
The dimension of the inner optimization problem as in \eqref{eq:optgps2} is
$|X_1|$ while the dimension of the outer optimization problem in
\eqref{eq:optgps1} is $|X_2|+1$.
\end{remark}
\begin{remark}
If $s\big((x_i)_{i\in X_1}\big)$ is also structurally separable, then the
separation approach can be applied recursively and the original optimization
problem decomposes into even smaller disjoint optimization problems.
\end{remark}
\begin{remark}
If two structural separators $s_1\big((x_i)_{i\in X_1}\big)$ and
$s_2\big((x_i)_{i\in X_2}\big)$ fulfilling \eqref{eq:cond} are
independent of each other, i.e.\ $X_1 \cap X_2 = \emptyset$, the decomposed
optimization problems can be solved in parallel.
Otherwise, either separator $s_1$ or $s_2$ needs to be optimized first if
$X_1\subset X_2$ or $X_2\subset X_1$, respectively.
\end{remark}
\begin{remark}
If the monotonicity condition in \eqref{eq:cond} holds for separator
$s=x_i$, $i \in \{0,\ldots,n-1\}$ then the minimum is located at the boundary
either at $\min_{x_i\in D_i} x_i$ for $\frac{df}{ds}(\vek{x})\geq0$ or
$\max_{x_i\in D_i} x_i$ for $\frac{df}{ds}(\vek{x})\leq0$.
\end{remark}
\begin{remark}
A degenerate solution is implied if $\frac{df}{ds}(\vek{x})=0$ for all
$\vek{x}\in D$ and $D$ contains more than one element.
\end{remark}
\begin{remark}
If the monotonicity condition is violated, then the structural separability can
still be exploited similar to \cite{Salomon1996} by solving
$\frac{ds}{dx_i}\big((x_i)_{i\in X_1}\big)=0$ for finding stationary points.
As already proposed in Section~\ref{sec:intro} this approach does not
necessarily compute all stationary points.
\end{remark}

\paragraph{Examples}
Five test problems are investigated in the light of subdomain separability.
They illustrate different aspects of the general approach.
Besides the partially separable function in Example~\ref{ex:ST}, there is the
exponential function which is solvable in parallel and globally monotonic in
Example~\ref{ex:Exp}, a recursive exponential function which is still globally
monotonic but cannot be solved in parallel in Example~\ref{ex:RExp} and the
Shubert function in Example~\ref{ex:Shu} that is not globally monotonic but
solvable in parallel.
Example~\ref{ex:Sal} can neither be solved in parallel nor is it globally
monotonic but it could still benefit from subdomain separability.

\begin{example}[Styblinski-Tang function \cite{StyblinskiTangFunction}]
\label{ex:ST}
Partially separable functions as in \eqref{eq:sep} are structurally separable
and always fulfill the monotonicity condition in \eqref{eq:cond} with
\begin{align*}
\frac{df}{ds_1}(\vek{x}) = 1\quad\wedge\quad \frac{df}{ds_2}(\vek{x}) = 1\ ,
\end{align*}
on any domain which yields the well-known fact that the corresponding
optimization problem can be decomposed and solved in parallel.

For example, the Styblinski-Tang function
\begin{align*}
f(\vek{x})=\frac12\sum_{i=0}^{n-1}\left(x_i^4 -16x_i^2+5x_i\right)\ ,
\end{align*}
is as in \eqref{eq:sep} except for the factor in front of the sum.
In \cite{Jamil2013} it is marked as non-separable.
Still, the problem can be decomposed into
$f(\vek{x})=\frac12\sum_{i=0}^{n-1}s_i$ with $\frac{df}{ds_i}=\frac12$
for any $\vek{x}\in\mathbb{R}^n$.
\end{example}

\begin{example}[Exponential function \cite{ExponentialFunction}]
\label{ex:Exp}
For the exponential function
\begin{align*}
f(\vek{x}) = -\exp\left(-\frac12\sum_{i=0}^{n}{x_i^2}\right)\ ,
\end{align*}
we choose $s_i=x_i^2$ to be the separators and the derivative of the
objective w.r.t.\ these separators is equal to
\begin{align*}
\frac{df}{ds_i}(\vek{x})
=\frac12\exp\left(-\frac12\sum_{i=0}^{n}{x_i^2}\right)\ .
\end{align*}
The exponential function is globally monotonically increasing.
Theorem~\ref{th:sep} becomes applicable to all separators.
The resulting subproblems can be solved in parallel.
\end{example}

\begin{example}[Recursive exponential function]
\label{ex:RExp}
To demonstrate the usefulness of structural separability we consider the
optimization problem in \eqref{eq:opt} with $D=\iv{-2,3}^n$,
objective function $f: \mathbb{R}^n \to \mathbb{R}$ and differentiable program
\begin{align*}
f(\vek{x}) &= y_{n}\ ,\\
y_{i+1} &= \exp(x_i^2 + y_i - 1)\ ,\\
y_0 &= 1\ ,
\end{align*}
which is non-separable in a conventional manner, but fulfills
Definition~\ref{def:gs} with separators $y_i$, $i=1,\ldots,n-1$.
To decompose the optimization problem it remains to be shown that the
derivatives of the objective with respect to the separators
$\frac{df}{dy_i}(\vek{x})$ for $i=0,\ldots,n$
are positive (or negative) on any subdomain.
From
\begin{align*}
\frac{\partial y_{i+1}}{\partial y_i} = y_{i+1}\ ,
\end{align*}
it follows that
\begin{align*}
\frac{df}{dy_i}(\vek{x})
= \prod_{j=i}^{n-1} \frac{\partial y_{j+1}}{\partial y_j}
= \prod_{j=i}^{n-1} y_{j+1}\ .
\end{align*}
By mathematical induction we show that $y_i \geq 1$ for $i=0,\ldots,n$.
The basis $y_0=1$ obviously fulfills the statement.
The assumption $y_i \geq 1$ yields
\begin{align*}
y_{i+1} = \exp(x_i^2 + y_i - 1) \geq \exp(x_i^2) \geq \exp(0) = 1\ ,
\end{align*}
due to monotonicity of the exponential function.
Thus, $y_i \geq 1$ and $\frac{df}{dy_i}(\vek{x}) \geq 1$
for $i=0,\ldots,n$.
Furthermore, we know that the global minimum is located at $\vek{x}=0$
with a value of $f(\vek{x})=1$.

As a consequence of Theorem~\ref{th:sep} the optimization problem can be
reformulated as
\begin{align*}
\min_{\vek{x}\in \iv{-2,3}^n} & f(\vek{x}) = y_n^\ast\ ,\\
\mathrm{s.t.}\ & y^\ast_{i+1} =
\min_{x_i \in \iv{-2,3}}\ \exp(x_i+y^\ast_i-1)\ ,\\
&y_{0\phantom{+1}}^\ast = 1\ .
\end{align*}

Note, that this function is globally monotonic w.r.t.\ the separator which does
not necessarily hold in general.
Since the separators are partially dependent on each other the corresponding
optimization problems need to be solved sequentially beginning with $y_1^\ast$.
\end{example}


\begin{example}[Shubert function \cite{ShubertFunction}]
\label{ex:Shu}
The Shubert function is given by
\begin{align*}
f(\vek{x})=\prod_{i=0}^{n-1}\sum_{j=1}^5{\cos((j+1)x_i+j)}\ .
\end{align*}
Each factor of the multiplication can be considered as a structural separator
with $s_i=\sum_{j=1}^5{\cos((j+1)x_i+j)}$.
Derivatives of the function value w.r.t.\ the separators are derived as
\begin{align*}
\frac{df}{ds_i}(\vek{x}) = \prod_{\substack{j=0 \\ j\neq i}}^{n-1} s_j\ .
\end{align*}
If any $s_i$ is either positive or negative, then the corresponding
optimization problem can be decomposed by Theorem~\ref{th:sep}.
\end{example}

\begin{example}[Salomon function \cite{Salomon1996}]
\label{ex:Sal}
We show that the Salomon function is separable only on selected subdomains.
The differentiable program is given by
\begin{align*}
f(\vek{x}) = 1-\cos\left(2\pi\sqrt{\sum_{i=0}^{n-1} x_i^2}\right)
+0.1\sqrt{\sum_{i=0}^{n-1} x_i^2}\ .
\end{align*}
Introduction of an intermediate result $S=\sqrt{\sum_{i=0}^{n-1} x_i^2}$ and
of separators $s_i = x_i^2$ yields the derivatives
\begin{align*}
\frac{df}{ds_i}(\vek{x})&=\frac{df}{dS}(\vek{x})\cdot\frac{dS}{ds_i}(\vek{x})\ ,\\
\frac{df}{dS}(\vek{x})&=2\pi\sin(2\pi S)+0.1\ ,\\
\frac{dS}{ds_i}(\vek{x})&=\frac{1}{2S}\ .
\end{align*}
As $\frac{dS}{ds_i}(\vek{x})$ is always positive it remains to be shown that
$\frac{df}{dS}(\vek{x})$ is either positive or negative.
The roots of $\frac{df}{dS}(\vek{x})$ are
\begin{align*}
S_{2z-1}=z+\arcsin\left(-\frac{0.1}{2\pi}\right)\frac1{2\pi}\ \wedge\  
S_{2z}=z-\frac12-\arcsin\left(-\frac{0.1}{2\pi}\right)\frac1{2\pi}\ ,
\ z\in\mathbb{N}^+.
\end{align*}
The function is monotonic between those roots.
Thus, Theorem~\ref{th:sep} can be applied to the Salomon function on the
(sub-)domain
$\vek{x}\in\iv{\frac{S_z}{\sqrt{n}},\frac{S_{z+1}}{\sqrt{n}}}^n$ for all
$z\in\mathbb{N}^+$.
If $z$ is even, the minimum of the separator is required for a minimum of the
objective function.
Otherwise, if $z$ is odd the separator needs to be maximized to obtain a
minimum of the objective function.
\end{example}

Next, we show how to compute interval adjoints and how they can be
used to apply Theorem~\ref{th:sep} to a differentiable program implementing a
function $f$.
Furthermore, we use interval adjoints to verify structural separators.

\section{Implementation}
\label{sec:implementation}
Let $f:\mathbb{R}^n\to\mathbb{R}$ be implemented as a differentiable program
$y=f(\vek{x})$ with \emph{independent} variables $\vek{x}$ and \emph{dependent}
variable $y$.
Following \cite{Griewank2008}, we assume that at a particular argument
$\vek{x}$ the implementation of $f$ can be expressed by a finite sequence of
elemental function evaluations as
\begin{align}
\label{eq:SAC}
\begin{alignedat}{2}
v_i &= x_i\ ,\quad && i=0,\ldots, n-1\ ,\\
v_j &= \varphi_j(v_i)_{i\prec j}\ ,\quad && j=n,\ldots,n+p\ ,\\
y &= v_{n+p}\ ,
\end{alignedat}
\end{align}
where $v_j$ for $j=n,\ldots,n+p-1$ are referred to as \emph{intermediate}
variables.
The precedence relation $i\prec j$ indicates a direct dependency of $v_j$ on
$v_i$.
Furthermore, the transitive closure $\prec^\ast$ of $\prec$ induces a partial
ordering of all indices $j=0,\ldots,n+p$.
Equation \eqref{eq:SAC} is also referred to as the single assignment code~(SAC)
of $f$.
The SAC may not be unique due to commutativity, associativity and
distributivity.
We assume a SAC to be given.

\subsection{Interval Arithmetic}
\label{sec:IA}
Interval arithmetic~(IA) is a concept that enables the computation of bounds of a
function evaluation on a given interval.
A closed interval of a variable $x$ with lower bound $\lb{x}$ and upper bound
$\ub{x}$ is denoted as
\begin{align*}
\iv{x} = \iv{\lb{x},\ub{x}}
= \left\{x\in\mathbb{R}\ |\ \lb{x}\leq x\leq\ub{x}\right\}\ .
\end{align*}
If there is only a single element in $\iv{x}$, i.e, the endpoints are equal
$\lb{x}=\ub{x}$, then the square brackets $\iv{\cdot}$ are dropped and
$x$ is called a degenerate interval.
In that sense IA represents an extension of the real/floating-point number
system.

Interval vectors are denoted by bold letters and have endpoints for each
component
\begin{align*}
\iv{\vek{x}}= \iv{\lb{\vek{x}},\ub{\vek{x}}}
=\left\{\vek{x}\in\mathbb{R}^n\ |\ \lb{x_i}\leq x_i\leq\ub{x_i}\right\}\ .
\end{align*}
When evaluating a function $y=f(\vek{x})$ in IA on $\iv{\vek{x}}$ we are
interested in the information
\begin{align*}
\iv{y}=f^\ast(\iv{\vek{x}})=\{f(\vek{x})\ |\ \vek{x}\in\iv{\vek{x}}\}\ .
\end{align*}
The asterisk denotes the united extension which computes the true range of
values on the given domain.
United extensions for all unary and binary elementary functions and
arithmetic operations are known and endpoint formulas can be looked up e.g.\ in
\cite{Moore2009}.
Unfortunately, the derivation of endpoint formulas for the united extensions of
composed functions might be expensive or even impossible.
Hence, we will compute corresponding estimates by natural interval extensions.
A natural interval extension can be obtained by replacing all elemental
functions $\varphi_j$ in \eqref{eq:SAC} with their corresponding united
extensions as
\begin{align}
\label{eq:NIESAC}
\begin{alignedat}{2}
\iv{v_i}&=\iv{x_i}\ ,\quad && i=0,\ldots, n-1\ ,\\
\iv{v_j}&=\varphi^\ast_j(\iv{v_i})_{i\prec j}\ ,\quad&&j=n,\ldots,n+p\ ,\\
\iv{y}&=\iv{v_{n+p}}\ .
\end{alignedat}
\end{align}
The computation of the interval function value by the natural interval
extension from \eqref{eq:NIESAC} results in
\begin{align*}
\iv{y}=f(\iv{\vek{x}})\supseteq f^\ast(\iv{\vek{x}})\ .
\end{align*}
The superset relation states that the interval $\iv{y}$ can be an
overestimation of all possible values over the given domain, but it guarantees
enclosure.
Furthermore, the natural interval extension of Lipschitz continuous functions
converges linearly to the united extension with decreasing domain size.

The reader is referred to \cite{Moore1979,Moore2009,Hansen2004,Hansen1969}
for more information on the topic.

\subsection{Adjoint Algorithmic Differentiation}
\label{sec:AD}
Algorithmic differentiation~(AD) techniques \cite{Griewank2008,Naumann2012} use
the chain rule to compute in addition to the function value of a \emph{primal}
implementation its derivatives with respect to independent variables at a
specified point.

The adjoint or backward mode of AD propagates derivatives of the function
w.r.t.\ independent and intermediate variables in reverse relative to the order
of their computation in the primal SAC.
The computationally intractable combinatorial optimization problem known as
\textsc{DAG Reversal} \cite{Naumann2009} is implied.

Following \cite{Naumann2012}, first-order adjoints are marked with a
subscript $_{(1)}$.
They are defined as
\begin{align*}
\vek{x}_{(1)} =  y_{(1)} \cdot \frac{df}{d\vek{x}}(\vek{x})\ .
\end{align*}
A single adjoint computation with \emph{seed} $y_{(1)} = 1$ results in the
gradient $\frac{df}{d\vek{x}}(\vek{x})$ stored in $\vek{x}_{(1)}$.

The adjoint of \eqref{eq:SAC} can be implemented by \eqref{eq:SAC} itself
followed by
\begin{align}
\label{eq:ASAC}
\begin{alignedat}{2}
v_{(1),n+p} &= y_{(1)}\ ,\\
v_{(1),k} &= \sum_{j:k\prec j}
v_{(1),j} \cdot \frac{\partial\varphi_j}{\partial v_k}(v_i)_{i\prec j}
\ ,\quad && k=n+p,\ldots,n\ ,\\
x_{(1),i} &= v_{(1),i}\ ,\quad && i=n-1,\ldots, 0\ .
\end{alignedat}
\end{align}
The evaluation of the adjoint yields the adjoints of all intermediate variables
$v_j$
\begin{align*}
v_{(1),j} &= y_{(1)} \cdot \frac{df}{dv_j}(\vek{x})\ ,\quad j=n+p,\ldots,n\ .
\end{align*}
%

\subsection{Interval Adjoints}
\label{sec:IAAD}
The natural interval extension of \eqref{eq:SAC} and \eqref{eq:ASAC}
yields the interval function value and its interval derivatives w.r.t.\ all
independent and intermediate variables as the result of a single evaluation.
It can be implemented as \eqref{eq:NIESAC} followed by
\begin{align}
\label{eq:NIEASAC}
\begin{alignedat}{2}
\iv{v_{(1),n+p}} &= \iv{y_{(1)}}\ ,\\
\iv{v_{(1),k}} &= \sum_{j:k\prec j}
\iv{v_{(1),j}}\cdot
\frac{\partial\varphi^\ast_j}{\partial v_k}(\iv{v_i})_{i\prec j}
\ ,\quad && k=n+p,\ldots,n\ ,\\
\iv{x_{(1),i}} &= \iv{v_{(1),i}}\ ,\quad && i=n-1,\ldots, 0\ .
\end{alignedat}
\end{align}
Compared to the traditional approach of AD in which the derivatives are only
computed at specified points, we now get globalized derivatives that contain
all possible values of the derivative over the specified domain.
The interval adjoints in \eqref{eq:ASAC} might be overestimated compared
to the united extension as it is already stated for the interval values in
Section~\ref{sec:IA}.
The natural interval extension of the adjoint converges linearly for
continuously differentiable functions \cite{Deussen2021}.
Higher-order converging interval extensions of adjoints can be derived,
e.g.\ by centered forms.

\subsubsection{Monotonicity Check}
A single evaluation of the interval adjoint for $\iv{y_{(1)}}=1$ suffices to
verify monotonicity as in \eqref{eq:cond} for all independent and intermediate
variables.
If the separation approach is embedded into a b\&b solver that involves
verification of the first-order optimality condition by interval adjoints, then
the monotonicity check is for free, assuming that the separators are known
apriori.

\subsubsection{Verification of Separators}
Interval adjoints can be used to detect if an intermediate
variable $s$ is a separator.
Note that $\frac{df}{ds}(\iv{\vek{x}})$ as well as
$\frac{df}{dx_i}(\iv{\vek{x}})$ are assumed to be available from the adjoint
evaluation required for the monotonicity check.
An additional evaluation of \eqref{eq:NIEASAC} is required with the adjoint of
the intermediate variable set to $\iv{s_{(1)}}=\frac{df}{ds}(\iv{\vek{x}})$.
The resulting adjoints of the independent variables become equal to
\begin{align*}
\iv{x_{(1),i}}=\frac{ds}{dx_i}(\iv{\vek{x}})\cdot\frac{df}{ds}(\iv{\vek{x}})\ .
\end{align*}
If $f$ is structurally separable and fulfills Definition~\ref{def:gs} with
separator $s$, then
\begin{alignat*}{2}
\frac{df}{dx_i}(\vek{x})&=
\frac{ds}{dx_i}\big((x_j)_{j\in X_1}\big)\cdot
\frac{df}{ds}(\vek{x})
\quad &&\forall i \in X_1\ ,
\end{alignat*}
needs to hold over the entire domain, which can be verified by
\begin{align}
\iv{x_{(1),i}}=\frac{df}{dx_i}(\iv{\vek{x}})\quad\forall i \in X_1\ ,
\label{eq:sv1}
\end{align}
and since
\begin{align*}
\frac{ds}{dx_i}\big((x_j)_{j\in X_1}\big) = 0\quad\forall i \in X_2\ ,
\end{align*}
all other independent variables need to satisfy
\begin{align}
\iv{x_{(1),i}}=0 \quad\forall i \in X_2\ .
\label{eq:sv2}
\end{align}

If any $\iv{x_{(1),i}}$ fulfills neither \eqref{eq:sv1} nor \eqref{eq:sv2},
then $s$ is not a separator.
Consequently, in addition to the interval adjoint evaluation for the
monotonicity check another interval adjoint evaluation is required for the
verification of each separator candidate.

An exhaustive search for separators should be avoided, due to the potentially
high number of intermediate variables and the associated number of separator
candidates.
Separators given by expert users can be verified efficiently.
Since structural separability as given in Definition~\ref{def:gs} is
domain-independent and thus is a global property, it is sufficient to identify
the separators once before performing the global search.

\section{Case Study}
\label{sec:casestudy}
The general idea of b\&b algorithms~\cite{Hansen2004} used for
global optimization problems as given in \eqref{eq:opt} is to
remove all parts of the domain that cannot contain a global minimum.
The implementation used for this case study is a variation of
the one presented in \cite{Deussen2019} implementing Theorem~\ref{th:sep}.
The user needs to specify at least one separator.
The algorithm performs the following steps:
\begin{itemize}
\item \emph{bisection}:
half-splitting in every dimension resulting in $2^n$ subdomains;
\item \emph{value check}:
elimination of subdomain $\iv{\vek{x}}$ if $f(\iv{\vek{x}}) > \ub{y}^\ast$ with
upper bound $\ub{y}^\ast$ for the global minimum;
\item \emph{first-order optimality check}:
If $\frac{df}{dx_i}(\iv{\vek{x}})\geq0$ and $\lb{x_i}$ is a bound of original
domain $D$, then recompute with $x_i=\lb{x_i}$,
else if $\frac{df}{dx_i}(\iv{\vek{x}})\leq0$ and $\ub{x_i}$ is a bound of
original domain $D$, then recompute with $x_i=\ub{x_i}$, otherwise eliminate
subdomain $\iv{\vek{x}}$;
\item \emph{improvement of bound} $\ub{y}^\ast$:
Evaluate the function at any point (e.g.\ midpoint) of the subdomain to find a
better bound $\ub{y}^\ast$;
\item \emph{separator check}:
Check monotonicity condition for apriori known separators and generate a
subproblem if Theorem~\ref{th:sep} is applicable.
\end{itemize}
Obviously, the improvement of the upper bound of the global minimum can
be enhanced by local searches instead of evaluation of the objective function
at the midpoint of the current subdomain.
Recursive separation is not supported by the current version of the solver.
It is the subject of ongoing development efforts.

The software implements the required interval adjoints by using the
interval type from the \texttt{Boost} library \cite{boost} as a base type of
the first-order adjoint type provided by
\dcocpp\footnote{\url{https://www.nag.co.uk/content/adjoint-algorithmic-differentiation}}
\cite{dco}.
Both template libraries make use of the concept of operator overloading as
supported e.g.\ by C++.

\begin{figure}[!tb]
\centering
\subfloat{\includegraphics[width=0.5\textwidth]{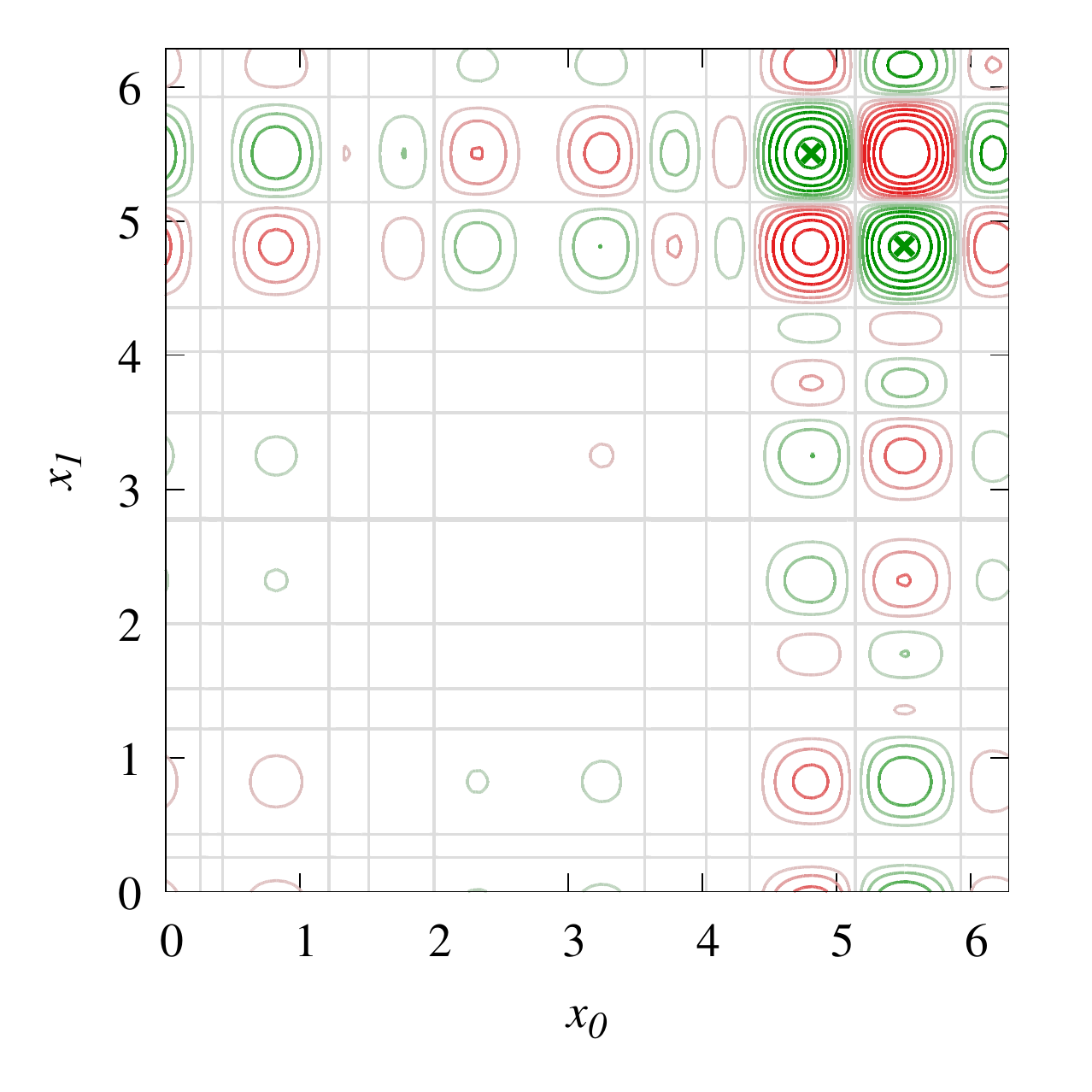}}
\subfloat{\includegraphics[width=0.5\textwidth]{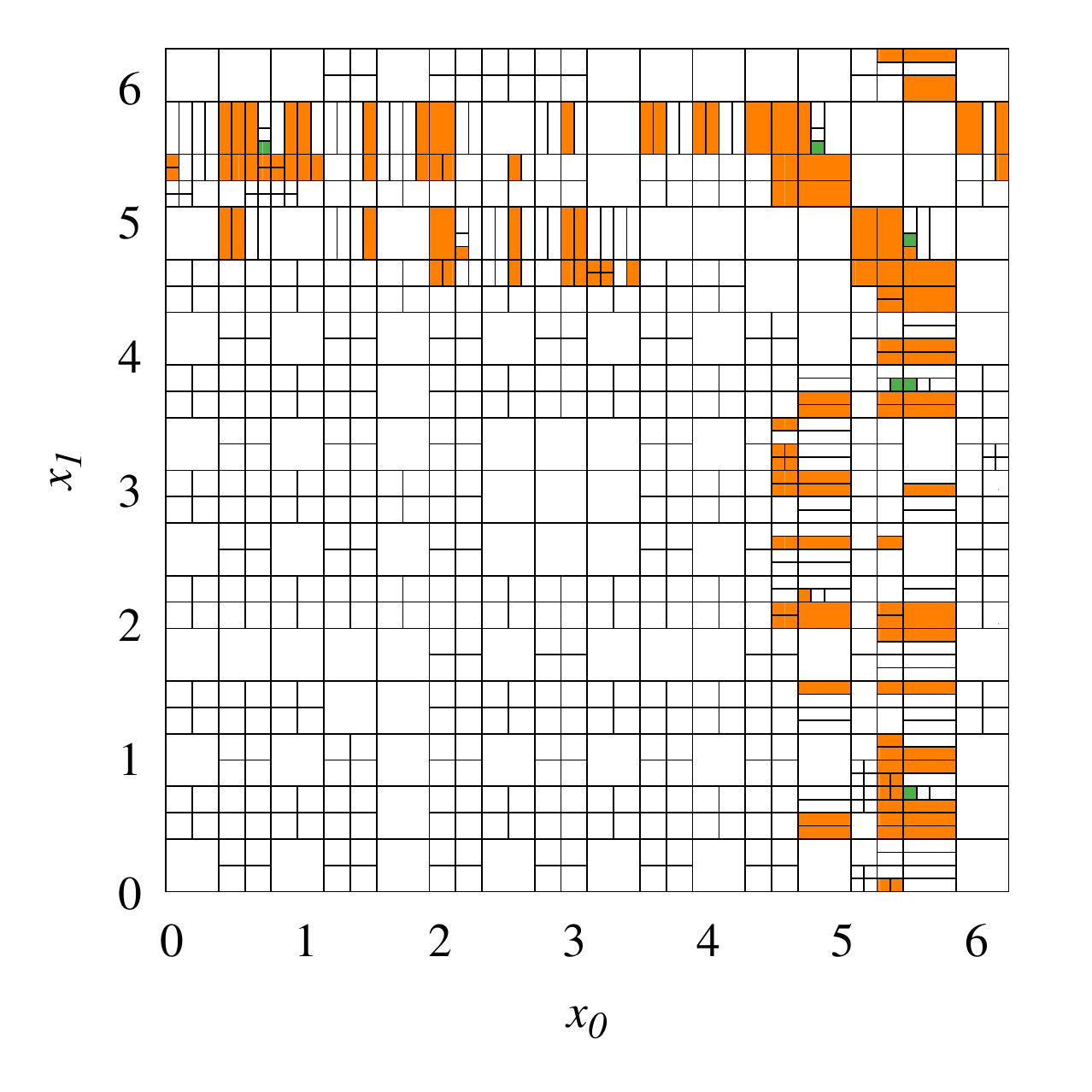}}
\caption{Isolines of the Shubert function for $n=2$ (left) with green lines
around local minima and red lines around local maxima. Subdomains considered
by the b\&b algorithm (right) with active subdomains marked in green,
white subdomains are discarded by the value check and by the first-order
optimality condition is violated on orange subdomains.
Non-square subdomains result from the separation approach.}
\label{fig:sep}
\end{figure}
\begin{table}[!tb]
\caption{Number of generated (sub-)domains by the b\&b algorithm
without and with separation.}
\centering
\small
  \begin{tabularx}{0.98\linewidth}{Xj{0.1\textwidth}p{0.2\textwidth}|
  p{0.05\textwidth}p{0.15\textwidth}p{0.15\textwidth}}
  \toprule
   & $n$ & domain && w/o sep. & w/ sep. \\
  \midrule 
  Styblinski-Tang & 4 & $\iv{-5,5}^4$ && 4609 & 285 \\
                  & 8 & $\iv{-5,5}^8$ && 5018817 & 569 \\
  Exponential & 4 & $\iv{-1,1}^4$ && 18 & 17 \\
              & 8 & $\iv{-1,1}^8$ && 258 & 33 \\
  Recursive Exponential & 4 & $\iv{-2.1,2.0}^4$ && 273 & 252 \\
                        & 8 & $\iv{-2.1,2.0}^8$ && 4609 & 549 \\
  Shubert & 4 & $\iv{-10,10}^4$ && 248618257 & 5272861 \\
  Salomon & 4 & $\iv{-100,100}^4$ && 2322 & 2322 \\
          & 8 & $\iv{-100,100}^8$ && 655618 & 655618 \\
  \bottomrule
  \end{tabularx}
\label{tab:bnbnodes}
\end{table}
On the left side of Fig.~\ref{fig:sep} isolines of the two-dimensional Shubert
function over the domain $\iv{0,2\pi}$ are shown with green lines around
(local) minima and red lines around local maxima.
The two global minima are marked by green crosses.
The right side of Fig.~\ref{fig:sep} shows the subdomains that are considered
by the b\&b algorithm.
For visualization the branching is set up to stop when the subdomain is smaller
than $0.1$ in any direction.
Non-square domains result from the separation approach and only occur in
regions that are proven to be monotonic by the interval adjoints.
Green boxes are \emph{active} domains that could contain the global minimum.
White boxes are discarded by the value check.
Orange boxes violate the first-order optimality condition.

Our solver is used to find the global minima of the examples from
Section~\ref{sec:contribution}.
The algorithm is performed with and without separation.
Structural separators are marked manually.
The results are summarized in Table~\ref{tab:bnbnodes}.
Most of the presented examples benefit from the domain-dependent separation
approach and have less subdomains generated by b\&b if separation is
enabled.
The benefit increases with growing dimensionality due to the exponential
complexity of the bisection.
The Salomon function does not benefit from the domain-dependent separation
since the relevant domains are already discarded by the value or first-order
optimality checks. 

We only measure runtimes for the Styblinski-Tang example with $n=8$ with and
without exploiting subdomain separability.
Since the derivative information is already available for all separators after
the first-order optimality check, the monotonicity check only iterates over the
separators defined by the user.
The number of subdomains considered by the b\&b algorithm
without separation is $8820$ times higher than with separation.
The corresponding runtime without separation is only 7673 times higher than
with separation.
This observation correlates with the fact that the computations of subdomains
that do not pass the value check are terminated immediately.
The percentage of subdomains that are eliminated due to the value check is
$30.2\%$ for the case without separation and $2.8\%$ with separation
approach.
The runtime estimates are averaged over 100 calls of the solver for both cases.

Our in-house solver has been designed as a playground for novel algorithms.
Neither is it optimized for speed, nor does it feature state-of-the-art
non-convex optimization methodology beyond the previously described b\&b
algorithm.
Ultimately, we aim for integration of our ideas into modern software solutions
for global optimization, e.g.\ \cite{baron,maingo}.


\section{Conclusion and Outlook}
\label{sec:discussion}
\label{sec:conclusion}
Our notion of separability combined with checks for monotonicity allows us to
decompose an optimization problem into smaller optimization problems.
It extends the verification of the first-order optimality condition as it was
proposed in \cite{Salomon1996}.
This also enables implementation of the proposed work as an add-on to
deterministic global optimization algorithms by considering all
possible optima instead of some candidates fulfilling first-order optimality
condition.
We explained how to utilize interval adjoints to verify monotonicity of
the objective function w.r.t.\ all structural separators at the cost of a
single adjoint evaluation.
As a first result, we revisited examples from the literature that
benefit from the domain-dependent separability approach.
Furthermore, we showed how to verify the separation property of a variable
in a given computer program at the cost of only two adjoint evaluations.

The verification of separators can be used as a starting point for research
into heuristics for automatically detecting separators in a computer program.
Further work in progress includes enabling recursive separation.
Moreover, interval arithmetic can result in a significant overestimation of the
true value range, e.g.\ due to the wrapping effect or the dependency problem.
The replacement of interval adjoints by an adjoint version of affine arithmetic
\cite{Messine2002} or by McCormick relaxations
\cite{McCormick1976,Mitsos2009,Deussen2020} of adjoints is expected to yield
tighter enclosures.


\end{document}